\documentclass[11pt]{amsart}
\usepackage{amsmath,amssymb,verbatim}
\usepackage[colorlinks,pagebackref,pdftex, bookmarks=false]{hyperref}

\numberwithin{equation}{section}

\newtheorem{thm}[equation]{Theorem}
\newtheorem{lemma}[equation]{Lemma}

\theoremstyle{definition}

\newcommand{\F}{\mathbb{F}}
\newcommand{\bP}{\mathbb{P}}
\newcommand{\Z}{\mathbb{Z}}

\begin{document}

\title[Permutation polynomials from R\'edei functions]
{Constructing permutation polynomials using generalized R\'edei functions}

\author{Zhiguo Ding}
\address{
  Hunan Institute of Traffic Engineering,
  Hengyang, Hunan 421001 China
}
\email{ding8191@qq.com}

\author{Michael E. Zieve}
\address{
  Department of Mathematics,
  University of Michigan,
  530 Church Street,
  Ann Arbor, MI 48109-1043 USA
}
\email{zieve@umich.edu}
\urladdr{http://www.math.lsa.umich.edu/$\sim$zieve/}

\keywords{Permutation polynomial, finite field, R\'edei function}

\date{\today}

\begin{abstract}
We determine all permutations in two large classes of polynomials over finite fields, where the construction of the polynomials in each class involves the denominators of a class of rational functions generalizing the classical R\'edei functions.  Our results generalize eight recent results from the literature, and our proofs of our more general results are much shorter and simpler than the previous proofs of special cases.
\end{abstract}

\maketitle


\section{Introduction}

In this paper we determine all permutation polynomials in two large families of polynomials.  Our main results are as follows, where $\mu_{q+1}$ denotes the set of $(q+1)$-th roots of unity in $\F_{q^2}^*$.

\begin{thm}\label{main1}
Pick positive integers $r$ and $n$ with $r\equiv n \pmod {q+1}$, distinct $u,v\in\mu_{q+1}$, and $a,b\in \F_{q^2}^*$, and write
\[
B(X) := a (X+u)^n + b (X+v)^n.
\]
Then  $f(X):=X^r B(X^{q-1})$ permutes $\F_{q^2}$  if and only if $(b/a)^{q-1}\ne (v/u)^n$ and
$\gcd(rn,q-1)=1$.
\end{thm}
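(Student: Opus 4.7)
My plan is to reduce the question to a map on the $(q+1)$-st roots of unity, and then to recognize that map as a Möbius transformation after the right change of variable. By the standard reduction for polynomials of the form $X^r h(X^{q-1})$ over $\F_{q^2}$, $f$ permutes $\F_{q^2}$ if and only if $\gcd(r, q-1) = 1$, $B$ has no root in $\mu_{q+1}$, and the map $g(X) := X^r B(X)^{q-1}$ permutes $\mu_{q+1}$. For $X \in \mu_{q+1}$ we have $X^q = 1/X$, $u^q = 1/u$, $v^q = 1/v$, and $X^r = X^n$ (from $r \equiv n \pmod{q+1}$), so a direct computation yields
\[
g(X) = \frac{(a^q/u^n)(X+u)^n + (b^q/v^n)(X+v)^n}{a(X+u)^n + b(X+v)^n}.
\]

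Next I will introduce the Möbius change of variable $T := (X+v)/(X+u)$, under which $\mu_{q+1}$ maps bijectively onto the $(q+1)$-element set $\{T \in \F_{q^2}^* : T^{q-1} = u/v\} \cup \{0, \infty\}$. Setting $W := T^n$, the function $g$ then becomes the Möbius map
\[
M(W) := \frac{\tilde{a} + \tilde{b} W}{a + b W}, \qquad \tilde{a} := a^q/u^n,\quad \tilde{b} := b^q/v^n.
\]
Two observations drive the proof: the map $T \mapsto T^n$ is injective on the coset $\{T : T^{q-1} = u/v\}$ (of $\mu_{q-1}$ in $\F_{q^2}^*$) if and only if $\gcd(n, q-1) = 1$; and $M$ is a non-degenerate Möbius transformation if and only if $a\tilde{b} \ne b\tilde{a}$, which simplifies exactly to $(b/a)^{q-1} \ne (v/u)^n$.

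When both conditions hold, $M$ is a bijection of $\bP^1$, and a short verification ($M(W)\cdot M(W)^q = 1$ whenever $W^{q-1} = (u/v)^n$, together with $M(0), M(\infty) \in \mu_{q+1}$) shows that $M$ maps the above $(q+1)$-element set into $\mu_{q+1}$; by cardinality it must be a bijection. Conversely, if $\gcd(n, q-1) > 1$ then $T \mapsto T^n$ is many-to-one on its domain so $g$ cannot be injective, and if $(b/a)^{q-1} = (v/u)^n$ then $-a/b$ lies in the image of $T \mapsto T^n$, forcing $B$ to have a root in $\mu_{q+1}$ and hence $f$ to have a multiple preimage of $0$. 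The main obstacle I anticipate is discovering the correct substitution $W = ((X+v)/(X+u))^n$; once it is in hand, the problem collapses to the study of a single Möbius transformation on $\bP^1$, from which both stated conditions emerge naturally as injectivity of the $T^n$-layer and non-degeneracy of $M$.
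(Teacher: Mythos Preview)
Your approach is correct and is essentially the paper's own: factor $g$ as $M\circ X^n\circ\rho$ with $M,\rho$ degree-one, note that $\rho$ sends $\mu_{q+1}$ bijectively to a coset of $\F_q^*$ together with $\{0,\infty\}$, and read off both conditions from injectivity of $X^n$ on that set and nondegeneracy of $M$. The only differences are cosmetic---the paper rescales $\rho$ so that its image is literally $\bP^1(\F_q)$, invokes a general lemma for $g(\mu_{q+1})\subseteq\mu_{q+1}$, and in the degenerate case argues that $g$ is constant rather than that $B$ has a root; your claim that $-a/b$ lies in the image of $T\mapsto T^n$ actually requires $\gcd(n,q-1)=1$, but since you have already disposed of the complementary case this is harmless.
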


\begin{thm}\label{main2}
Pick positive integers $r,n$ with $r\equiv n \pmod {q+1}$, and elements $a,b,v \in \F_{q^2}^*$ with $v\notin\mu_{q+1}$, and write
\[
B(X) := a (X+v^{-q})^n + b (X+v)^n.
\]
Then  $f(X):=X^r B(X^{q-1})$ permutes  $\F_{q^2}$ if and only if $bv^n/a \notin \mu_{q+1}$ and
$\gcd(r,q-1)=1=\gcd(n,q+1)$.
\end{thm}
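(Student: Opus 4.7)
The plan is to apply the standard reduction for permutations of the form $X^rh(X^{q-1})$ on $\F_{q^2}$: $f$ permutes $\F_{q^2}$ if and only if $\gcd(r,q-1)=1$ and $g(X):=X^rB(X)^{q-1}$ permutes $\mu_{q+1}$. The substantive question is thus when $g$ permutes $\mu_{q+1}$. On that set $X^q=X^{-1}$, and one also has $v^{q^2}=v$ and $X^r=X^n$ (since $r\equiv n\pmod{q+1}$); applying Frobenius to $B(X)^q$ and simplifying with these identities yields
\[
g(X) = \frac{a^q v^{-n}(X+v)^n + b^q v^{qn}(X+v^{-q})^n}{a(X+v^{-q})^n + b(X+v)^n}.
\]

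The key step, and the main technical obstacle, is introducing the generalized R\'edei substitution $Y:=(X+v)/(X+v^{-q})$. Because $v\notin\mu_{q+1}$, the point $-v^{-q}$ lies outside $\mu_{q+1}$, so this M\"obius map is defined on all of $\mu_{q+1}$; a short calculation gives $Y^{q+1}=v^{q+1}$, so $X\mapsto Y$ is a bijection from $\mu_{q+1}$ onto the coset $v\mu_{q+1}$. Dividing numerator and denominator of $g(X)$ by $(X+v^{-q})^n$ recasts it as
\[
g(X)=\frac{a^qv^{-n}Y^n+b^qv^{qn}}{a+bY^n}.
\]
The map $Y\mapsto Y^n$ is a bijection from $v\mu_{q+1}$ onto $v^n\mu_{q+1}$ if and only if $\gcd(n,q+1)=1$, which yields the necessity of that condition: otherwise $g$ takes fewer than $q+1$ distinct values on $\mu_{q+1}$.

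Assuming $\gcd(n,q+1)=1$, set $Z:=Y^n/v^n\in\mu_{q+1}$ and $\alpha:=bv^n/a$. Straightforward algebra produces
\[
g(X) = a^{q-1}\cdot\tilde h(Z), \qquad \tilde h(Z):=\frac{Z+\alpha^q}{1+\alpha Z}.
\]
Since $a^{q-1}\in\mu_{q+1}$, $g$ permutes $\mu_{q+1}$ if and only if $\tilde h$ does. Using $Z^q=Z^{-1}$ on $\mu_{q+1}$, a direct computation gives $\tilde h(Z)^{q+1}=1$ identically, so $\tilde h$ sends its domain into $\mu_{q+1}$; the only potential obstruction is the pole at $Z=-1/\alpha$. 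A short check shows $-1/\alpha\in\mu_{q+1}$ iff $\alpha\in\mu_{q+1}$, and in that case $\alpha^q=\alpha^{-1}$ collapses $\tilde h$ to the constant $\alpha^{-1}$. Hence $\tilde h$ permutes $\mu_{q+1}$ if and only if $\alpha\notin\mu_{q+1}$, exactly matching the hypothesis $bv^n/a\notin\mu_{q+1}$ and completing the argument.
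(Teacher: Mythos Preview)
Your proof is correct and follows essentially the same strategy as the paper: reduce to whether $g(X)=X^rB(X)^{q-1}$ permutes $\mu_{q+1}$, then factor $g$ as a degree-one map composed with $X^n$ composed with the M\"obius map $(X+v)/(X+v^{-q})$, and analyze each piece. The only cosmetic difference is that you track the cosets $v\mu_{q+1}$ and $v^n\mu_{q+1}$ explicitly and verify each step by direct computation, whereas the paper normalizes the inner map to land in $\mu_{q+1}$ (via Lemma~\ref{deg1mu}) and uses the degree argument of Lemma~\ref{scr} to dispose of the no-roots and range conditions in one stroke.
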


We also show that special cases of Theorems~\ref{main1} and \ref{main2} imply \cite[Thms.~3.3 an 3.5]{WYDM}, \cite[Thms.~1.1 and 1.2]{ZR}, and corrected versions of \cite[Thms.~1 and 2]{FFLW} and \cite[Thms.~8 and 9]{PUW}.

The reason several authors have studied instances of the polynomials in Theorems~\ref{main1} and \ref{main2}, besides the simple description of these polynomials, is that in some sense these polynomials are produced from the important class of rational functions known as R\'edei functions.  These rational functions were originally introduced in \cite{R} over fields of odd order, and in \cite{N} over fields of even order.  R\'edei functions have the form $\rho(X)\circ X^n\circ\eta(X)$ for certain degree-one rational functions $\rho,\eta\in\F_{q^2}(X)$.  The polynomials $B(X)$ in Theorems~\ref{main1} and \ref{main2} are the denominators of rational functions of this form in which $\rho(X)$ and $\eta(X)$ are allowed to be in a more general class of degree-one rational functions than those used in the definition of R\'edei functions.  Since R\'edei functions induce permutations of certain finite fields, it is perhaps intuitively plausible that polynomials constructed from denominators of R\'edei functions might also induce permutations, but this connection is not immediate, and becomes even more mysterious when instead of R\'edei functions we use generalized R\'edei functions.

The connection between R\'edei functions and permutation polynomials was first made in \cite{ZR}, and then further developed in \cite{FFLW, PUW, WYDM}.  All of these papers use certain instances of the classical R\'edei functions.  In this paper we make a threefold generalization, by treating all classical R\'edei functions rather than just specific instances, by addressing both odd and even $q$, and by using the generalized R\'edei functions discussed above.  We obtain results that are much more general than those in previous papers.  
Our proofs combine some parts of the strategy in \cite{ZR} with further ideas.  This yields especially simple arguments which are significantly shorter than the previous proofs of special cases of our results.  We note that  a quick
conceptual treatment of R\'edei functions is in \cite{DZtrig}, and a
comprehensive account will appear in our forthcoming paper \cite{DZRedei}.

This paper is organized as follows.  In the next section we present our notation and recall some easy known results.  Then we prove Theorems~\ref{main1} and \ref{main2} in Section~\ref{secproofs}, and we conclude the paper in Section~\ref{secreferences} by describing the specific special cases of Theorems~\ref{main1} and \ref{main2} which imply eight previous results in the literature.


\section{Background results}

In this section we recall some easy known results. 


\subsection{Notation}

We use the following notation and terminology 
throughout this paper:
\begin{itemize}
\item $q$ is a fixed prime power;
\item $\mu_{q+1}$ denotes the set of $(q+1)$-th roots of unity in $\F_{q^2}$;
\item $\bP^1(\F_q) := \F_q\cup\{\infty\}$;
\item for any $g(X)\in\F_{q^2}(X)$, we write $g^{(q)}(X)$ for the rational function obtained from $g(X)$ by replacing 
each coefficient by its $q$-th power;
\item the \emph{degree} of a nonzero rational function $g(X)$ is the maximum of the degrees of $N(X)$ and $D(X)$, 
for any prescribed choice of coprime polynomials $N(X)$ and $D(X)$ such that $g(X)=N(X)/D(X)$.
\end{itemize}


\subsection{Permutations and roots of unity}

The following result is a special case of \cite[Lemma~2.1]{Zlem}.

\begin{lemma}\label{old}
Write $f(X)=X^r B(X^{q-1})$ where $r$ is a positive integer, $q$ is a prime power, and $B(X)\in\F_{q^2}[X]$. Then $f(X)$ 
permutes\/ $\F_{q^2}$ if and only if $\gcd(r,q-1)=1$ and $g_0(X):=X^r B(X)^{q-1}$ permutes $\mu_{q+1}$.
\end{lemma}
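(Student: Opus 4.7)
The plan is to exploit the surjective group homomorphism $\phi:\F_{q^2}^*\to\mu_{q+1}$ defined by $\phi(x)=x^{q-1}$, whose kernel is $\F_q^*$. Since $r\ge 1$ we have $f(0)=0$, so $f$ permutes $\F_{q^2}$ if and only if it permutes $\F_{q^2}^*$. A direct computation gives
\[
\phi(f(x)) = x^{r(q-1)}B(x^{q-1})^{q-1} = g_0(\phi(x))
\]
for every $x\in\F_{q^2}^*$, showing that $\phi$ semiconjugates $f$ to $g_0$; the rest of the proof consists of reading off the bijectivity of $f$ from this commutative diagram.

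For the ``only if'' direction, assume $f$ permutes $\F_{q^2}^*$. If $B(\zeta)=0$ for some $\zeta\in\mu_{q+1}$, then every element of the nonempty fiber $\phi^{-1}(\zeta)$ would be sent to $0$, contradicting injectivity. Hence $g_0$ is a well-defined map $\mu_{q+1}\to\mu_{q+1}$, and the commutative diagram combined with the surjectivity of $\phi$ forces $g_0$ to be surjective, hence a permutation of the finite set $\mu_{q+1}$. Fixing $\zeta\in\mu_{q+1}$ and any $x_0\in\phi^{-1}(\zeta)$, the fiber is $x_0\cdot\F_q^*$ and $f(x_0\alpha)=B(\zeta)x_0^r\alpha^r$ for $\alpha\in\F_q^*$ (since $\alpha^{q-1}=1$), so injectivity of $f$ on this fiber is equivalent to $\alpha\mapsto\alpha^r$ being injective on $\F_q^*$, which forces $\gcd(r,q-1)=1$.

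For the ``if'' direction, assume $\gcd(r,q-1)=1$ and that $g_0$ permutes $\mu_{q+1}$. Since $g_0$ has no zero values on $\mu_{q+1}$, we get $B(\zeta)\ne 0$ for all $\zeta\in\mu_{q+1}$, so $f$ is nonzero on $\F_{q^2}^*$. The semiconjugation shows $f$ sends $\phi^{-1}(\zeta)$ into $\phi^{-1}(g_0(\zeta))$; distinct fibers map to distinct fibers since $g_0$ is bijective, and within each fiber $f$ acts as a nonzero scalar multiple of $\alpha\mapsto\alpha^r$, which is a bijection of $\F_q^*$ by the gcd hypothesis. Combining these shows $f$ is a bijection of $\F_{q^2}^*$.

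The most delicate step is ruling out the case $B(\zeta)=0$ on $\mu_{q+1}$ in the forward direction, since $g_0$ must first be a well-defined self-map of $\mu_{q+1}$ before one can invoke the commutative diagram; everything else is a clean translation between the bijectivity of $f$ on $\F_{q^2}^*$ and the joint bijectivity of $g_0$ on $\mu_{q+1}$ together with $\alpha\mapsto\alpha^r$ on the fiber $\F_q^*$.
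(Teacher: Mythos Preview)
Your proof is correct. Note, however, that the paper does not actually prove this lemma: it simply cites it as a special case of \cite[Lemma~2.1]{Zlem}. Your argument is essentially the standard one underlying that reference---factor $\F_{q^2}^*$ via the surjection $x\mapsto x^{q-1}$ onto $\mu_{q+1}$ with kernel $\F_q^*$, observe the semiconjugation $\phi\circ f=g_0\circ\phi$, and read off bijectivity fiber by fiber. One small presentational point: the identity $\phi(f(x))=g_0(\phi(x))$ is only literally defined once $f(x)\ne 0$, so strictly speaking the case $B(\zeta)=0$ should be disposed of before stating that the diagram commutes on all of $\F_{q^2}^*$; you are clearly aware of this, as your final paragraph flags it, but it would be cleaner to reorder the argument accordingly.
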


The following result encodes a procedure introduced in \cite{ZR}, which is spelled out in \cite[Lemma~2.2]{Zx}.

\begin{lemma}\label{lemx}
Let $q$ be a prime power, and write $g_0(X)=X^r B(X)^{q-1}$ where $r\in\Z$ and $B(X)\in\F_{q^2}[X]$. Then $g_0(X)$ permutes 
$\mu_{q+1}$ if and only if $B(X)$ has no roots in $\mu_{q+1}$ and $g(X):=X^r B^{(q)}(1/X)/B(X)$ permutes $\mu_{q+1}$.
\end{lemma}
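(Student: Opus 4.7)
The plan is to exhibit a simple identity showing that $g_0(X)$ and $g(X)$ coincide as functions on $\mu_{q+1}$ whenever $B(X)$ has no roots in $\mu_{q+1}$; the equivalence in the lemma then drops out immediately.

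First I would record two elementary facts. For every $X\in\mu_{q+1}$ we have $X^{q+1}=1$, hence $X^q=1/X$. Second, for any $B(X)\in\F_{q^2}[X]$ and any $X\in\F_{q^2}$, the identity $B(X)^q = B^{(q)}(X^q)$ holds, since raising to the $q$th power is a field automorphism of $\F_{q^2}$ which sends each coefficient of $B$ to its $q$th power. Combining these two facts yields $B(X)^q=B^{(q)}(1/X)$ for every $X\in\mu_{q+1}$.

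Next I would dispatch the degenerate case: if $B(\alpha)=0$ for some $\alpha\in\mu_{q+1}$, then $g_0(\alpha)=0\notin\mu_{q+1}$, so $g_0$ does not permute $\mu_{q+1}$, and simultaneously $g(X)$ is undefined at $\alpha$, so both sides of the claimed equivalence fail. So I may assume $B$ has no roots in $\mu_{q+1}$, in which case $B(X)\in\F_{q^2}^*$ for every $X\in\mu_{q+1}$. Under this assumption the calculation $g_0(X)=X^r B(X)^{q-1}=X^r B(X)^q/B(X)=X^r B^{(q)}(1/X)/B(X)=g(X)$ shows that $g_0$ and $g$ take the same values at every point of $\mu_{q+1}$; moreover $g_0(X)^{q+1}=X^{r(q+1)}B(X)^{q^2-1}=1$, so both functions actually map $\mu_{q+1}$ into itself. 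Hence one permutes $\mu_{q+1}$ if and only if the other does.

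There is no genuine obstacle here: the whole lemma is powered by the slogan $X^q=X^{-1}$ on $\mu_{q+1}$, together with the Frobenius action on the coefficients of $B$. The only point requiring a moment of care is recognizing that when $B$ has a root in $\mu_{q+1}$ both sides of the equivalence fail for essentially trivial reasons, so that case needs only a line of explanation.
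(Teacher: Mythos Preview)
Your proof is correct and is the standard argument. The paper does not supply its own proof of this lemma but instead cites it from \cite[Lemma~2.2]{Zx}; the argument there is exactly the one you give, based on the identity $B(X)^q=B^{(q)}(X^q)=B^{(q)}(1/X)$ for $X\in\mu_{q+1}$.

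One inessential quibble: when $B(\alpha)=0$ for some $\alpha\in\mu_{q+1}$ you write that $g$ is ``undefined'' at $\alpha$, but by the same identity the numerator $\alpha^r B^{(q)}(1/\alpha)=\alpha^r B(\alpha)^q$ also vanishes, so this is $0/0$ rather than a pole. This does not affect your argument, since the right-hand side of the equivalence already fails via its first conjunct (``$B$ has no roots in $\mu_{q+1}$''); you could simply drop the remark about $g$ being undefined.
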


\begin{lemma}\label{scr}
Write $g(X):=X^n B^{(q)}(1/X)/B(X)$ for some nonzero $B(X)\in\F_{q^2}[X]$ and some integer $n$ with $n\ge\deg(B)$.  If $\deg(g)=n$ then $B(X)$ has no roots in $\mu_{q+1}$ and $g(\mu_{q+1})\subseteq\mu_{q+1}$.
\end{lemma}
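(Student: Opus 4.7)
My plan is to prove Lemma~\ref{scr} in three short steps. First, set $P(X):=X^n B^{(q)}(1/X)$, so that $g(X)=P(X)/B(X)$ as rational functions. Since $\deg B^{(q)}=\deg B\le n$, both $P$ and $B$ are polynomials of degree at most $n$. The definition of $\deg(g)$ then gives $\deg(g)=\max(\deg P,\deg B)-\deg\gcd(P,B)$, so the hypothesis $\deg(g)=n$ forces $\gcd(P,B)$ to be a nonzero constant; that is, $P$ and $B$ are coprime in $\F_{q^2}[X]$.

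Next I would establish the key identity: for every $\zeta\in\mu_{q+1}$ we have $1/\zeta=\zeta^q$, and applying the $q$-th power Frobenius to $B(X)=\sum b_i X^i$ gives $B^{(q)}(\zeta^q)=\sum b_i^q\zeta^{qi}=B(\zeta)^q$. Therefore
\[
P(\zeta)=\zeta^n B^{(q)}(\zeta^q)=\zeta^n B(\zeta)^q.
\]
If $B(\zeta)=0$ for some $\zeta\in\mu_{q+1}$ then $P(\zeta)=0$ as well, so $X-\zeta$ is a common factor of $P$ and $B$, contradicting the coprimality established in Step~1. This proves that $B$ has no roots in $\mu_{q+1}$, which is the first conclusion of the lemma.

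For the second conclusion, for any $\zeta\in\mu_{q+1}$ we now have $g(\zeta)=P(\zeta)/B(\zeta)=\zeta^n B(\zeta)^{q-1}$, and raising to the $(q+1)$-th power yields
\[
g(\zeta)^{q+1}=\zeta^{n(q+1)}B(\zeta)^{(q-1)(q+1)}=1,
\]
since $\zeta^{q+1}=1$ and $B(\zeta)\in\F_{q^2}^*$ satisfies $B(\zeta)^{q^2-1}=1$. Hence $g(\zeta)\in\mu_{q+1}$. I do not expect any real obstacle; the only modestly delicate point is Step~1, where one must observe that $\deg(g)=n$ combined with $\deg P,\deg B\le n$ forces exact coprimality of $P$ and $B$ without needing to first decide which of $P$ or $B$ achieves the degree bound.
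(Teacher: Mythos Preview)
Your proof is correct and follows essentially the same approach as the paper's: establish coprimality of $P(X)=X^nB^{(q)}(1/X)$ and $B(X)$ from the degree hypothesis, deduce that $B$ has no roots in $\mu_{q+1}$ via $P(\zeta)=\zeta^n B(\zeta)^q$, and then verify $g(\zeta)^{q+1}=1$. The only cosmetic difference is that the paper checks $g(\alpha)^q=1/g(\alpha)$ directly from the expression $\alpha^nB^{(q)}(1/\alpha)/B(\alpha)$, whereas you first simplify to $g(\zeta)=\zeta^nB(\zeta)^{q-1}$ and then raise to the $(q+1)$-th power; both arguments are equivalent.
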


\begin{proof}
Note that $g(X)=N(X)/B(X)$ where $N(X):=X^n B^{(q)}(1/X)$, and that the hypothesis $n\ge\deg(B)$ implies that $N(X)\in\F_{q^2}[X]$.
Since 
\[
\max(\deg(B),\deg(N))\le n=\deg(g)\le\max(\deg(B),\deg(N)),
\]
the second inequality must be an equality, so that $B(X)$ and $N(X)$ are coprime.
It follows that $B(X)$ has no roots in $\mu_{q+1}$, since if $\alpha\in\mu_{q+1}$ is a root of $B(X)$ then
$0=B(\alpha)^q=B^{(q)}(1/\alpha)$, so that $\alpha$ is also a root of $N(X)$, contradicting coprimality of $B(X)$ and $N(X)$. Thus for any $\alpha\in\mu_{q+1}$ we have
\[
g(\alpha)^q= \Bigl(\frac{ \alpha^n B^{(q)}(1/\alpha)}{B(\alpha)} \Bigr)^q = \frac{B(\alpha)} {\alpha^n B^{(q)}(1/\alpha)} = \frac1{g(\alpha)},
\]
so that $g(\alpha)\in\mu_{q+1}$ and thus $g(\mu_{q+1})\subseteq\mu_{q+1}$.
\end{proof}


\subsection{Degree-one rational functions}

We begin with the well-known and easily proved description of degree-one rational functions.

\begin{lemma}\label{who1}
For $\alpha,\beta,\gamma,\delta\in\F_{q^2}$ with $\{\gamma,\delta\}\ne\{0\}$, the rational function $(\alpha X+\beta)/(\gamma X+\delta)$ has degree $1$ if and only if $\alpha\delta\ne \beta\gamma$.
\end{lemma}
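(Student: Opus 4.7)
The plan is to identify $\alpha\delta-\beta\gamma$ as the obstruction to having a common factor, and then dispatch the two implications by simple case analysis. Write $N(X):=\alpha X+\beta$ and $D(X):=\gamma X+\delta$, which are polynomials of degree at most $1$. By definition, the degree of the rational function $N(X)/D(X)$ equals $\max(\deg N',\deg D')$ where $N',D'$ are coprime representatives, so the task is to pin down when both (a) at least one of $N,D$ has degree $1$ after removing common factors, and (b) the resulting function is nonzero.

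For the backward direction, suppose $\alpha\delta\ne\beta\gamma$. I will first observe that $N$ is nonzero: if $\alpha=\beta=0$ then $\alpha\delta-\beta\gamma=0$, a contradiction. Similarly $D$ is nonzero by the hypothesis $\{\gamma,\delta\}\ne\{0\}$. Next I claim $N$ and $D$ share no root in $\overline{\F_{q^2}}$: if $c$ were a common root then $\alpha c+\beta=0$ and $\gamma c+\delta=0$, and eliminating $c$ by multiplying the first equation by $\delta$, the second by $\beta$, and subtracting gives $(\alpha\delta-\beta\gamma)c=0$, forcing $c=0$ and then $\beta=\delta=0$, which contradicts $\alpha\delta\ne\beta\gamma$. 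Hence $N$ and $D$ are coprime. Finally, at least one of $\alpha,\gamma$ is nonzero (else $\alpha\delta-\beta\gamma=0$), so $\max(\deg N,\deg D)=1$, giving $\deg(N/D)=1$.

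For the forward direction, I argue the contrapositive: assuming $\alpha\delta=\beta\gamma$, I show $N/D$ has degree $0$ (or is the zero function). If $\alpha=0$, then $\beta\gamma=0$; the hypothesis $\{\gamma,\delta\}\ne\{0\}$ rules out $\gamma=\delta=0$, so either $\gamma=0$ (forcing $D=\delta$ constant, hence $N/D$ constant) or $\beta=0$ (forcing $N=0$). If $\alpha\ne 0$, then $\delta=\beta\gamma/\alpha$; since $\{\gamma,\delta\}\ne\{0\}$ we must have $\gamma\ne 0$, and then $D=(\gamma/\alpha)N$, so $N/D=\alpha/\gamma$ is constant. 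In every subcase the degree is not $1$.

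I don't foresee a real obstacle here — the argument is purely a classification by which of $\alpha,\beta,\gamma,\delta$ vanish, with the quantity $\alpha\delta-\beta\gamma$ (the resultant of the two linear polynomials) playing the role of the common-root detector. The only place one has to be a little careful is in tracking the edge cases where one of the coefficients is zero, and in remembering that the hypothesis $\{\gamma,\delta\}\ne\{0\}$ is exactly what is needed to exclude the degenerate denominator.
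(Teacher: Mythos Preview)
Your argument is correct: the resultant $\alpha\delta-\beta\gamma$ is precisely the obstruction to a common linear factor, and your case analysis handles all the degenerate subcases cleanly. The paper itself does not supply a proof of this lemma at all; it simply introduces it as ``well-known and easily proved'' and states it without argument, so there is no approach to compare against. Your write-up is therefore more detailed than what the paper offers, and is exactly the kind of routine verification the authors had in mind.
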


Next we note that the degree of a rational function is unchanged by composing on both sides with degree-one rational functions:

\begin{lemma}\label{deg}
For any nonconstant $g(X)\in\F_{q^2}(X)$, and any degree-one $\rho,\eta\in\F_{q^2}(X)$, the degree of $\eta(g(\rho(X)))$ is $\deg(g)$.
\end{lemma}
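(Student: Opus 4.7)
The plan is to split the claim into two independent assertions: that $\deg(\eta(g(X)))=\deg(g)$ and that $\deg(g(\rho(X)))=\deg(g)$ for every degree-one $\eta,\rho\in\F_{q^2}(X)$.  Applying these two facts in succession gives the lemma.

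For the post-composition step, I would write $g(X)=N(X)/D(X)$ with $\gcd(N,D)=1$, so $d:=\deg(g)=\max(\deg N,\deg D)$, and write $\eta(Y)=(\alpha Y+\beta)/(\gamma Y+\delta)$ with $\alpha\delta\ne\beta\gamma$.  Then
\[
\eta(g(X))=\frac{\alpha N(X)+\beta D(X)}{\gamma N(X)+\delta D(X)}.
\]
Since the $2\times 2$ coefficient matrix is invertible over $\F_{q^2}$, each of $N$ and $D$ is an $\F_{q^2}$-linear combination of $\alpha N+\beta D$ and $\gamma N+\delta D$, so any common polynomial factor of the latter two divides both $N$ and $D$ and is therefore a constant.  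Applying the same invertibility to the length-$2$ vector of leading coefficients rules out the simultaneous drop in degree of $\alpha N+\beta D$ and $\gamma N+\delta D$ below $d$, which gives $\max(\deg(\alpha N+\beta D),\deg(\gamma N+\delta D))=d$.

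For the pre-composition step, I would write $\rho(X)=(aX+b)/(cX+e)$ with $ae\ne bc$, and clear denominators in $g(\rho(X))$ by multiplying numerator and denominator by $(cX+e)^d$, producing polynomials $M(X):=(cX+e)^d N(\rho(X))$ and $L(X):=(cX+e)^d D(\rho(X))$, each of degree at most $d$.  I expect this case to be the main obstacle, since I must verify both $\gcd(M,L)=1$ and $\max(\deg M,\deg L)=d$.  For coprimality: any common root $\xi$ of $M$ and $L$ with $c\xi+e\ne 0$ would make $\rho(\xi)$ a common root of $N$ and $D$, which is impossible by coprimality; and the remaining case $\xi=-e/c$ (when $c\ne 0$) would force both $\deg N<d$ and $\deg D<d$, contradicting $d=\max(\deg N,\deg D)$.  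For the degree equality, the leading coefficient of $M$ works out to $c^d N(a/c)$ when $c\ne 0$ (and to a nonzero scalar multiple of the leading coefficient of $N$ when $c=0$), and similarly for $L$; in each case, $\gcd(N,D)=1$ together with $ae\ne bc$ precludes simultaneous vanishing of these two leading coefficients, so $\max(\deg M,\deg L)=d$ as needed.
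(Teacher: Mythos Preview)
Your argument is correct and carries out in detail the ``direct verification'' that the paper's proof mentions but does not spell out.  The paper also offers a second, cleaner route: the identity $\deg(h)=[\F_{q^2}(x):\F_{q^2}(h(x))]$ for $x$ transcendental over $\F_{q^2}$ makes degree multiplicative under composition, so $\deg(\eta\circ g\circ\rho)=1\cdot\deg(g)\cdot 1=\deg(g)$ with no case analysis.  Your elementary approach is fully self-contained, while the field-theoretic one is shorter but imports an external fact.  One minor phrasing point: in your $c=0$ branch the phrase ``leading coefficient'' silently shifts from the degree-$d$ coefficient to the top nonzero coefficient; it would be cleaner simply to note that when $c=0$ the map $\rho$ is affine, so $\deg M=\deg N$ and $\deg L=\deg D$ directly, giving $\max(\deg M,\deg L)=d$.
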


\begin{proof}
This is easy to verify directly.  Alternately, it follows from the fact that if $x$ is transcendental over $\F_{q^2}$ and $h(X)\in\F_{q^2}(X)$ is nonconstant then $\deg(h)=[\F_{q^2}(x):\F_{q^2}(h(x))]$ (e.g.\ see \cite[Lemma~2.2]{DZAA}).
\end{proof}

The next two results are reformulations of \cite[Lemmas~2.1 and 3.1]{ZR}.

\begin{lemma}\label{deg1mu}
If $\alpha,\beta\in\F_{q^2}$ satisfy $\alpha^{q+1}\ne\beta^{q+1}$, then $(\beta^qX+\alpha^q)/(\alpha X+\beta)$ 
permutes $\mu_{q+1}$.
\end{lemma}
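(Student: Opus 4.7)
The plan is to invoke Lemma~\ref{scr} with $n=1$ and $B(X):=\alpha X+\beta$. A direct computation gives
\[
X\cdot\frac{B^{(q)}(1/X)}{B(X)}
= \frac{X\bigl(\alpha^q/X+\beta^q\bigr)}{\alpha X+\beta}
= \frac{\beta^q X+\alpha^q}{\alpha X+\beta},
\]
so this rational function $g(X)$ is exactly the function in the statement. Before applying Lemma~\ref{scr}, I need to check its hypotheses. Clearly $n=1\ge\deg(B)$. To see that $\deg(g)=1$, I would use Lemma~\ref{who1}: writing $g(X)=(\beta^q X+\alpha^q)/(\alpha X+\beta)$ in the standard form, the required nonvanishing condition is $\beta^q\cdot\beta-\alpha^q\cdot\alpha\ne 0$, i.e.\ $\alpha^{q+1}\ne\beta^{q+1}$, which is exactly our hypothesis.

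Lemma~\ref{scr} then supplies two conclusions: first, $B(X)$ has no roots in $\mu_{q+1}$, which ensures that $g$ is well-defined at every point of $\mu_{q+1}$; and second, $g(\mu_{q+1})\subseteq\mu_{q+1}$.

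To upgrade this containment to a permutation, I would observe that since $g(X)$ has degree $1$, it is a M\"obius transformation and hence induces a bijection of $\bP^1(\F_{q^2})$; in particular $g$ is injective on $\mu_{q+1}$. Combined with $g(\mu_{q+1})\subseteq\mu_{q+1}$ and the finiteness of $\mu_{q+1}$, this forces $g$ to permute $\mu_{q+1}$.

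There is no real obstacle: once one recognizes the function as $X\cdot B^{(q)}(1/X)/B(X)$ for the linear polynomial $B(X)=\alpha X+\beta$, the result drops out of Lemma~\ref{scr} together with the standard fact that degree-one rational functions are bijections of the projective line. The only subtlety worth flagging is the last step, where injectivity plus containment in a finite set is needed to obtain the permutation conclusion.
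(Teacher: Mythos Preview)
Your argument is correct. The computation of $g(X)=X\,B^{(q)}(1/X)/B(X)$ with $B(X)=\alpha X+\beta$ is right, the hypothesis $\alpha^{q+1}\ne\beta^{q+1}$ is exactly the determinant condition in Lemma~\ref{who1} guaranteeing $\deg(g)=1$, and then Lemma~\ref{scr} plus the bijectivity of degree-one rational functions on $\bP^1(\F_{q^2})$ finishes the job.

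The paper itself does not prove this lemma: it simply records it as a reformulation of \cite[Lemma~2.1]{ZR}. So your approach is genuinely different from (and in a sense more than) what the paper does. What you gain is a self-contained derivation that stays entirely within the paper's own framework: once Lemma~\ref{scr} is in hand, Lemma~\ref{deg1mu} is just its $n=1$ case together with the standard M\"obius-transformation fact. What the paper's citation buys is brevity and a pointer to the original source. Your route also makes transparent why the condition $\alpha^{q+1}\ne\beta^{q+1}$ is the right one: it is precisely what is needed for $\deg(g)=1$, which is the input to Lemma~\ref{scr}.
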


\begin{lemma}\label{mu}
If $\alpha\in\F_{q^2}\setminus\F_q$ and $\beta\in\mu_{q+1}$, then $(\alpha X+\beta\alpha^q)/(X+\beta)$ maps $\mu_{q+1}$ bijectively onto\/ $\bP^1(\F_q)$.
\end{lemma}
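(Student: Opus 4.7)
The plan is to let $h(X) := (\alpha X + \beta\alpha^q)/(X+\beta)$ and carry out three steps: show $h$ is a degree-one rational function (so injective on $\bP^1(\F_{q^2})$), show $h(\mu_{q+1})\subseteq \bP^1(\F_q)$, and then finish by a cardinality count since $|\mu_{q+1}|=q+1=|\bP^1(\F_q)|$.

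For the degree, Lemma~\ref{who1} reduces the task to checking that $\alpha\cdot\beta\neq \beta\alpha^q\cdot 1$, which holds because $\beta\in\mu_{q+1}$ is nonzero and $\alpha\notin\F_q$ gives $\alpha\neq\alpha^q$. Being degree one, $h$ is a bijection on $\bP^1(\F_{q^2})$, and in particular is injective on $\mu_{q+1}$.

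The main step is the containment $h(\mu_{q+1})\subseteq\bP^1(\F_q)$. First I would handle the pole: $-\beta$ lies in $\mu_{q+1}$ (when $q$ is odd, $(-1)^{q+1}=1$; when $q$ is even, $-\beta=\beta$), and $h(-\beta)=\infty\in\bP^1(\F_q)$. For any other $\zeta\in\mu_{q+1}$, I would compute $h(\zeta)^q$ using the identities $\zeta^q=1/\zeta$, $\beta^q=1/\beta$, and $\alpha^{q^2}=\alpha$, then clear denominators by multiplying numerator and denominator by $\beta\zeta$:
\[
h(\zeta)^q=\frac{\alpha^q\zeta^q+\beta^q\alpha^{q^2}}{\zeta^q+\beta^q}=\frac{\alpha^q\beta+\alpha\zeta}{\beta+\zeta}=h(\zeta),
\]
so $h(\zeta)\in\F_q$. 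This is the only computational point, and it is direct.

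Finally, combining injectivity of $h$ on $\mu_{q+1}$ with the containment $h(\mu_{q+1})\subseteq\bP^1(\F_q)$ and the equality $|\mu_{q+1}|=q+1=|\bP^1(\F_q)|$, I conclude that $h$ restricts to a bijection $\mu_{q+1}\to\bP^1(\F_q)$. I do not anticipate any serious obstacle; the only subtlety is remembering to include $\infty$ as an image (via the pole at $-\beta$) so that the cardinality argument actually matches $|\bP^1(\F_q)|$ rather than $|\F_q|$.
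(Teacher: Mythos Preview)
Your proof is correct and complete: the degree-one check via Lemma~\ref{who1}, the verification that $h(\zeta)^q=h(\zeta)$ for $\zeta\in\mu_{q+1}\setminus\{-\beta\}$, the handling of the pole at $-\beta$, and the final cardinality count all go through exactly as you describe. The paper does not actually prove this lemma; it presents it as a reformulation of results from \cite{ZR} and cites that reference. Your argument is the standard direct verification that one would expect to find there, so there is no meaningful methodological difference to discuss.
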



\section{Proofs of main results} \label{secproofs}

In this section we prove Theorems~\ref{main1} and \ref{main2}.

\begin{proof}[Proof of Theorem~\ref{main1}]
By Lemmas~\ref{old} and \ref{lemx}, $f(X)$ permutes $\F_{q^2}$ if and only if $\gcd(r,q-1)=1$, $B(X)$ has no roots in $\mu_{q+1}$, and
the rational function $g(X):=X^n B^{(q)}(1/X)/B(X)$ permutes $\mu_{q+1}$.  Plainly $B(X)\ne 0$ and
\[
X^n B^{(q)}(1/X) = a^qu^{-n} (X+u)^n + b^qv^{-n} (X+v)^n,
\]
so that
\[
g(X) = \frac{a^qu^{-n}X + b^qv^{-n}} {aX+b}\circ X^n \circ \frac{X+u}{X+v}.
\]
By Lemma~\ref{who1}, $g(X)$ is constant (and hence not bijective) when $a^q u^{-n}b = b^qv^{-n}a$, or equivalently $(b/a)^{q-1}=(v/u)^n$.  Now suppose $(b/a)^{q-1}\ne (b/u)^n$, so that $(a^q u^{-n} X + b^q v^{-n})/(aX+b)$ and $(X+u)/(X+v)$ have degree one by Lemma~\ref{who1}, and thus $\deg(g)=n$ by Lemma~\ref{deg}.
Then Lemma~\ref{scr} implies that $B(X)$ has no roots
in $\mu_{q+1}$, and that $g(\mu_{q+1})\subseteq\mu_{q+1}$.
Hence $g(X)$ permutes $\mu_{q+1}$ if and only if $g(X)$ is injective on $\mu_{q+1}$.
Pick $w$ with $w^{q-1}=u/v$.  Then $w \in \F_{q^2}\setminus\F_q$ and
\[
\frac{X+u}{X+v} = w^{-1}\frac{wX+vw^q}{X+v} = w^{-1}\rho(X),
\]
where $\rho(x):=(wX+vw^q)/(X+v)$  induces a bijection $\mu_{q+1}\to \bP^1(\F_q)$ by Lemma~\ref{mu}.  Hence  $g(X) = \eta(X)\circ X^n\circ\rho(X)$ for some degree-one $\eta(X)\in\F_{q^2}(X)$, so that $g(X)$ is
injective on $\mu_{q+1}$ if and only if $X^n$ is injective on $\bP^1(\F_q)$, or equivalently $\gcd(n,q-1)=1$.
\end{proof}

\begin{proof}[Proof of Theorem~\ref{main2}]
By Lemmas~\ref{old} and \ref{lemx}, $f(X)$ permutes $\F_{q^2}$ if and only if $\gcd(r,q-1)=1$, $B(X)$ has no roots in $\mu_{q+1}$, and
the rational function $g(X):=X^n B^{(q)}(1/X)/B(X)$ permutes $\mu_{q+1}$.  Plainly $B(X)\ne 0$ and
\[
X^n B^{(q)}(1/X) = a^qv^{-n} (X+v)^n + b^q v^{qn} (X+v^{-q})^n,
\]
so that
\[
g(X) = \frac{a^qv^{-n} X + b^q v^{qn}}{bX+a}\circ X^n\circ\frac{X+v}{X+v^{-q}}.
\]
By Lemma~\ref{who1}, $g(X)$ is constant (and hence not bijective) if $a^{q+1}v^{-n}=b^{q+1}v^{qn}$, or equivalently $bv^n/a\in\mu_{q+1}$.
Now suppose $bv^n/a\notin\mu_{q+1}$, so that $(a^qv^{-n} X + b^q v^{qn})/(bX+a)$ and $(X+v)/(X+v^{-q})$ have degree $1$ by Lemma~\ref{who1}, and thus $\deg(g)=n$ by Lemma~\ref{deg}.
Then Lemma~\ref{scr} implies that $B(X)$ has no roots
in $\mu_{q+1}$, and that $g(\mu_{q+1})\subseteq\mu_{q+1}$.
Hence $g(X)$ permutes $\mu_{q+1}$ if and only if $g(X)$ is injective on $\mu_{q+1}$.
Here  $\rho(X) := (X + v)/(v^q X+1)$ permutes $\mu_{q+1}$ by Lemma~\ref{deg1mu}, so that
$g(X)$ is injective on $\mu_{q+1}$ if and only if $X^n$ is injective on $\mu_{q+1}$, or equivalently $\gcd(n,q+1)=1$.
\end{proof}


\section{Connection with previous results} \label{secreferences}

The combination of \cite[Thms.~1 and 2]{FFLW} (when restricted to the case $n+m(q+1)>0$ in order to make them true)
are contained in the combination of the following special cases of our results:
\begin{itemize}
\item Theorem~\ref{main1} with both $b=\pm a$ and $v=-u$; 
\item Theorem~\ref{main2} with both $b=\pm a$ and $v^{q+1}=-1$.
\end{itemize}
One counterexample to \cite[Thms.~1 and 2]{FFLW} is $n=1=-m$, in which case the conditions on $n$ and $m$ in those results are satisfied but the function $P(x)$ in those results does not permute $\F_{q^2}$.

A corrected version of \cite[Thm.~8]{PUW} is contained in the special case of Theorem~\ref{main1} in which $q$ is even, $a=b=1$, and $\{u,v\}=\F_4\setminus\F_2$.  Our result in this special case has the permutation conditions $\gcd(rn,q-1)=1$ and $3\nmid n$.  By contrast, the conditions in \cite[Thm.~8]{PUW} are $\gcd(r,q-1)=1=\gcd(n,q^2-1)$.  One counterexample to \cite[Thm.~8]{PUW} is $q=32$, $n=11$, and $r=n+q+1$ (or in the notation of \cite{PUW}, $m=1$), which yields a permutation of $\F_{q^2}$ but violates the condition $\gcd(n,q^2-1)=1$ in \cite[Thm.~8]{PUW}.

A corrected version of \cite[Thm.~9]{PUW} is contained in the special case of Theorem~\ref{main1} in which $q$ is even, $a=v$, $b=u$, and $\{u,v\}=\F_4\setminus\F_2$.  Our result in this special case has the permutation conditions $\gcd(rn,q-1)=1$ and $n\not\equiv 2 \pmod 3$.  By contrast, the conditions in \cite[Thm.~9]{PUW} say instead (writing $r=n+m(q+1)$) ``$\gcd(n+m(q+1),q-1)=1$, and $\gcd(n,q^2-1)=1$, when $n\equiv 1 \pmod 3$".  Although this phrase is ambiguous, every plausible interpretation is contradicted by the example $q=128$, $n=43$, and $r=n+q+1$ (so $m=1$), which yields a permutation of $\F_{q^2}$ with $n\equiv 1\pmod 3$ but $\gcd(n,q^2-1)\ne 1$.  Here we also note that the identity displayed in the statement of \cite[Thm.~9]{PUW} is false.  It would be true if one defined the numbers $a_i$ in the same way as in \cite[Thm.~8]{PUW}.
In order to avoid confusion caused by this incorrect identity, we note that our corrected version of \cite[Thm.~8]{PUW} uses the definition of $N_n(x)$ at the bottom of \cite[p.~5]{PUW}, together with the definition of $N_n(x,\alpha)$ at the bottom of \cite[p.~4]{PUW}.

The special case of Theorem~\ref{main1} with $a=(-b)^n$ and $u=vb^{q-1}$ yields \cite[Thm.~3.5]{WYDM} and \cite[Thm.~1.2]{ZR}.
The special case of Theorem~\ref{main2} with $a=-1/v$ and $b=1/v^n$ yields \cite[Thm.~3.3]{WYDM} and \cite[Thm.~1.1]{ZR} after composing on both sides with scalar multiples.  

Finally, we note that \cite{FFLW} and \cite{PUW} emphasize that R\'edei functions can be computed recursively, via a method equivalent to computing $n$-th powers by $n-1$ multiplications.  A much faster method for computing R\'edei functions, analogous to the method of computing $n$-th powers by repeated squaring (which takes less than $2\log_2(n)$ multiplications), was given in \cite{More}.



\end{document}